\newcommand{\bbH}{{\mathbb H}}
\newcommand{\bbN}{{\mathbb N}}
\newcommand{\bbQ}{{\mathbb Q}} 
\newcommand{\bbR}{{\mathbb R}} 
\newcommand{\bbZ}{{\mathbb Z}} 
\newcommand{\bbC}{{\mathbb C}} 
\newcommand{\bfG}{{\mathbf G}}
\newcommand{\bfP}{{\mathbf P}}
\newcommand{\bfN}{{\mathbf N}}
\newcommand{\frakn}{\mathfrak{n}}
\newcommand{\fraka}{\mathfrak{a}}
\newcommand{\frakg}{\mathfrak{g}}
\newcommand{\calo}{\mathcal{O}}
\let\phi\varphi
\let\epsilon\varepsilon
\let\rho\varrho
\let\theta\vartheta
\newcommand{\abs}[1]{{\left\lvert #1\right\rvert}}
\newcommand{\ad}{\operatorname{ad}}
\newcommand{\bs}{\backslash}
\newcommand{\defq}{\mathrel{\mathop:}=}
\newcommand{\id}{\operatorname{id}}
\newcommand{\isom}{\operatorname{Isom}}
\newcommand{\lfvol}[1]{\norm{#1}^{\mathrm{lf}}}
\newcommand{\lipschitz}{\operatorname{Lip}}
\newcommand{\lipvol}[1]{\norm{#1}_{\mathrm{Lip}}}
\newcommand{\minvol}{\operatorname{minvol}}
\newcommand{\norm}[1]{{\left\lVert #1\right\rVert}}
\newcommand{\vol}{\operatorname{vol}}
\newcommand{\noqed}{\renewcommand{\qedsymbol}{}}
\newcommand{\ucov}[1]{\ensuremath{\widetilde{#1}}}
\newcommand{\Clf}{C^{\mathrm{lf}}}           % locally finite singular chains 
\newcommand{\sauer}[1]%
{\marginpar{\begin{minipage}[c]{\marginparwidth}%
                                  \scriptsize\emph{\color{red}Roman: #1}%
                                  \end{minipage}}%
}
\newcommand{\loeh}[1]%
{\marginpar{\begin{minipage}[c]{\marginparwidth}%
                                  \scriptsize\emph{\color{red}Clara: #1}%
                                  \end{minipage}}%
}
\newtheorem{theorem}{Theorem}[section]
\newtheorem{lemma}[theorem]{Lemma}
\newtheorem{corollary}[theorem]{Corollary}
\theoremstyle{definition}
\newtheorem{definition}[theorem]{Definition}
\newtheorem{setup}[theorem]{Setup}
\newtheorem{strategy}[theorem]{Strategy}
\newtheorem*{acknow}{Acknowledgments}
\numberwithin{equation}{section}
\title{Simplicial volume of Hilbert modular varieties}
\author{Clara L\"oh}
\address{Graduiertenkolleg ``Analytische Topologie und
  Metageometrie,'' Westf\"alische Wil\-helms-Universit\"at M\"unster,
  M\"unster, Germany}
\email{clara.loeh@uni-muenster.de}
\urladdr{wwwmath.uni-muenster.de/u/clara.loeh}
\author{Roman Sauer}
\address{University of Chicago, Chicago, USA}
\email{romansauer@member.ams.org}
\urladdr{www.romansauer.de}
\date{\today}
\subjclass[2000]{Primary 53C23; Secondary 53C35}
\begin{document}

\maketitle

\begin{abstract}
  The simplicial volume introduced by Gromov provides a topologically
  accessible lower bound for the minimal volume. Lafont and Schmidt
  proved that the simplicial volume of closed, locally symmetric
  spaces of non-compact type is positive. In this paper, we extend 
  this positivity result to certain non-compact locally
  symmetric spaces of finite volume, including Hilbert
  modular varieties. The key idea is to reduce the problem to the
  compact case by first relating the simplicial volume of these
  manifolds to the Lipschitz simplicial volume and then taking
  advantage of a proportionality principle for the Lipschitz
  simplicial volume.  Moreover, using computations of Bucher-Karlsson
  for the simplicial volume of products of closed surfaces, we obtain
  the exact value of the simplicial volume of Hilbert modular
  surfaces.
\end{abstract}

%%%%%%%%%%%%%%%%%% beginning of content %%%%%%%

\section{Introduction and Statement of results}\label{sec:intro}

\subsection{Simplicial volume}\label{subsec:definition of simplicial volume}
The simplicial volume of a manifold was introduced by
Gromov~\cite{gromov}: The simplicial volume of an $n$-dimensional,
compact, oriented manifold $M$ 
(possibly with non-empty boundary~$\partial M$) is defined by
\[\norm{M,\partial M}=
\inf\bigl\{\abs{c}_1;~\text{$c\in C_n(M)$ relative fundamental cycle of
  $(M,\partial M)$}\bigr\}.
\]
Here $\abs{c}_1$ denotes the $\ell^1$-norm of a chain in the singular
chain complex $C_n(M)$ with real coefficients  
with respect to the basis of all singular simplices. 
Similarly, one defines the simplicial volume $\lfvol{M}$ for an open, 
oriented   
$n$-manifold $M$ by fundamental cycles in the locally finite 
chain complex $\Clf_n(M)$ with $\bbR$-coefficients: 
\[\lfvol{M}=
\inf\bigl\{\abs{c}_1;~\text{$c\in\Clf_n(M)$ fundamental cycle of
  $M$}\bigr\}.
\]
In the sequel we suppress the superscript~``lf'' in the notation and
just write $\norm{M}$.  Because both simplicial volumes are
multiplicative with respect to finite coverings, it makes sense to
define the simplicial volume of a non-orientable manifold~$M$
as~$\|\overline{M}\|/2$ where $\overline{M}$ is the 
orientation double covering of $M$. Notice that the simplicial volume
is invariant under proper homotopy equivalences.

\subsection{The volume estimate}\label{subsec:minimal volume}
One of the main motivations to introduce the simplicial volume is to
establish a lower bound for the minimal volume by a homotopy invariant
(proper homotopy invariant in the non-compact case).  Gromov defined
the minimal volume~$\minvol(M)$ of a smooth manifold~$M$ as the
infimum of volumes $\vol(M,g)$ over all complete Riemannian
metrics~$g$ on~$M$ with sectional curvature pinched between~$-1$
and~$1$.  The following fundamental inequality
holds~\cite{gromov}*{p.~12, p.~73} for all smooth $n$-manifolds~$M$:
\begin{equation*}
\norm{M}\le (n-1)^nn! \cdot \minvol(M).
\end{equation*}
A basic question is whether $\minvol(M)>0$, and thus whether $\norm{M}>0$. 
For example, for  
negatively curved locally symmetric spaces of finite volume one knows that 
the minimal volume is achieved by the locally 
symmetric metric~\cites{bcg, juan, storm}. Gromov first proved 
that the minimal volume of closed locally symmetric spaces 
is positive. Connell and Farb gave a much more detailed 
proof of this result in~\cite{chris+benson}. They also proved 
that locally symmetric spaces of finite volume that do not 
have a local $\bbH^2$- or $\mathrm{SL}(3,\bbR)/\mathrm{SO}(3,\bbR)$-factor 
(thereby excluding Hilbert modular varieties) have a positive 
minimal volume within the Lipschitz class of the locally symmetric 
metric.\footnote{Benson Farb informed us that positivity of the minimal 
volume in the finite volume case is erroneously 
stated in the cited 
reference without this Lipschitz constraint.}

\smallskip

Thurston 
proved that if $M$ has a complete Riemannian metric of finite volume 
with sectional curvature between $-k$ and $-1$ then 
$\norm{M}>0$~\cite{gromov}*{Section~0.2}. 
Moreover, if $M$ is a compact locally 
symmetric space of non-compact type, then $\norm{M}>0$ by a 
result of Lafont and Schmidt~\cite{ben-jean}.

\subsection{Lipschitz simplicial volume}\label{subsec:computations}
The aim of this paper is to give explicit 
computations and estimates of the simplicial volume of certain locally
symmetric spaces with $\bbR$-rank at least $2$, namely, of Hilbert
modular varieties.  

However, the simplicial volume of non-compact manifolds is much less well 
behaved than the one for compact manifolds. For instance, both 
the proportionality principle~\cite{gromov}*{Section~0.4} 
and the product formula~\cite{gromov}*{Section~0.2} fail in 
general. Even worse, the simplicial volume of $\Gamma\bs X$,  
where $X$ is a symmetric space and $\Gamma$ an arithmetic lattice 
of $\bbQ$-rank at least~$2$, vanishes~\cite{clara-roman}. 

The following metric variant 
of the simplicial volume, which was first considered 
by Gromov~\cite{gromov} and is studied in detail 
in our paper~\cite{clara-roman}, 
shows a much more decent behaviour. 

\begin{definition}\label{def:Lipschitz simplicial volume}
Let $M$ be an oriented, $n$-dimensional Riemannian manifold 
without boundary. For a locally 
finite chain $c=\sum_{i\ge 0} a_{\sigma_i}\sigma_i$ define 
$\lipschitz(c)\in [0,\infty]$ as the supremum of the 
Lipschitz constants of the singular simplices 
$\sigma_i:\Delta^n\rightarrow M$ where $\Delta^n$ carries 
the standard Euclidean metric. Then the  
\emph{Lipschitz simplicial volume}~$\lipvol{M}$ of~$M$ is defined as
\[\lipvol{M}=
\inf\bigl\{\abs{c}_1;~\text{$c\in\Clf_n(M)$ 
fundamental cycle of $M$ with $\lipschitz(c)<\infty$}\bigr\}.
\]
\end{definition}

Note that $\lipvol{M}=\norm{M}$ for all closed Riemannian
manifolds~$M$ because singular homology and smooth singular homology
are isometrically isomorphic with respect to the
$\ell^1$-semi-norm~\cite{loehmh}*{Proposition~5.3}.

The main advantage of the Lipschitz simplicial volume over the
ordinary one is the presence of the following proportionality
principles.   

\begin{theorem}[Gromov's proportionality principle, open -- closed]
\label{thm:gromov proportionality principle}  
Let $M$ be a closed Riemannian manifold, and let $N$ be a 
complete Riemannian manifold of finite volume. 
Assume the universal covers of $M$ and $N$ are isometric. 
Then 
\[\frac{\lipvol{M}}{\vol(M)}=\frac{\lipvol{N}}{\vol(N)}.\]
\end{theorem}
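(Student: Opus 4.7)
My plan is to adapt Gromov's smearing argument to the open--closed setting, using the framework of measure/Lipschitz homology developed in~\cite{clara-roman}. Let $X$ be the common Riemannian universal cover, let $G=\isom(X)$ be its isometry group equipped with a Haar measure~$\mu$, and set $\Gamma_M=\pi_1(M)$, $\Gamma_N=\pi_1(N)$; both are lattices in~$G$ since $M$ is closed and $N$ has finite volume. I would choose Borel fundamental domains $F_M,F_N\subset G$ for the left $\Gamma_M$- and $\Gamma_N$-actions and normalize~$\mu$ so that $\mu(F_M)=\vol(M)$ and $\mu(F_N)=\vol(N)$, which is possible by the standard relationship between the Haar measure on~$G$, the point-stabilizer volume, and the Riemannian measure on the quotients.

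For the inequality $\lipvol{N}/\vol(N)\le\lipvol{M}/\vol(M)$, I would take a finite Lipschitz fundamental cycle $c=\sum_i a_i\sigma_i$ of~$M$, lift each~$\sigma_i$ to $\widetilde\sigma_i\colon\Delta^n\to X$, and form the smeared measure chain
\[
\smear_{M\to N}(c)\;=\;\sum_i a_i\int_{F_N}\delta_{p_N\circ g\widetilde\sigma_i}\,d\mu(g),
\]
where $p_N\colon X\to N$ is the covering projection. Every simplex $p_N\circ g\widetilde\sigma_i$ in its support has the same Lipschitz constant as~$\sigma_i$, so the Lipschitz constant of the chain is at most~$\lipschitz(c)<\infty$, and local finiteness follows from $\mu(F_N)<\infty$ together with the compactness of the image of each $g\widetilde\sigma_i$. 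A Fubini-type computation, combined with the $G$-invariance of the volume form on~$X$, shows that $\smear_{M\to N}(c)$ represents $\vol(M)\cdot[N]$ in $\Hlf_n(N)$. Dividing by~$\vol(M)$ and applying the isometric equivalence between Lipschitz measure homology and Lipschitz singular homology from~\cite{clara-roman} produces a Lipschitz singular fundamental cycle of~$N$ whose $\ell^1$-norm is bounded by $\abs{c}_1\cdot\vol(N)/\vol(M)$.

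For the reverse inequality I would run the analogous construction with the roles of~$M$ and~$N$ exchanged, smearing a locally finite Lipschitz fundamental cycle~$c'$ of~$N$ over~$F_M$ to obtain a finite Lipschitz fundamental cycle of~$M$. Here the Lipschitz hypothesis enters in a critical way: even though $c'$ may contain infinitely many simplices, the uniform bound on their Lipschitz constants ensures that the summed smeared measures still define a well-defined finite measure chain on the compact manifold~$M$ with finite $\ell^1$-norm and controlled Lipschitz constant.

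The main obstacle is the simultaneous control of the $\ell^1$-norm, the fundamental class, the Lipschitz bound, and local finiteness under the passage from measure chains to singular chains; on the open manifold~$N$ one cannot simply quote the classical compact comparison. This is precisely the situation for which the Lipschitz variant of measure homology in~\cite{clara-roman} was designed, and I would invoke it as a black box to close the argument.
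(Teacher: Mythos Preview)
Your approach is sound in outline but takes a genuinely different route from the paper. The paper does not carry out any smearing at all: it simply quotes Gromov's auxiliary invariants $\|V:\vol\|_\pm$ from~\cite{gromov}*{Section~4.5}, recalls Gromov's inequality $\|V:\vol\|_-\le\lipvol{V}/\vol(V)\le\|V:\vol\|_+$ for complete finite-volume~$V$, observes that the ``stable at infinity'' hypothesis in Gromov's proportionality theorem is automatic in this situation, and reads off $\|N:\vol\|_-=\|N:\vol\|_+=\lipvol{M}/\vol(M)$. What you sketch is instead the strategy of the companion paper~\cite{clara-roman}, which appears here as the separate Theorem~\ref{thm:proportionality principle}; the authors explicitly offer that result as an alternative precisely because Gromov's argument ``is not very detailed.'' Your version has the merit of making visible where the Lipschitz hypothesis is actually used (in the reverse smear $N\to M$, to turn an infinite locally finite cycle into a finite measure chain on the compact target), whereas the paper's proof is a pure citation and needs no new machinery. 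Two caveats: the black box you invoke from~\cite{clara-roman} is established there under a non-positive curvature assumption, so as written your argument really yields Theorem~\ref{thm:proportionality principle} rather than the curvature-free Theorem~\ref{thm:gromov proportionality principle}; and the assertion that $\Gamma_M,\Gamma_N$ are lattices in $G=\isom(X)$ with Haar measure simultaneously normalizable to both Riemannian volumes is immediate when $G$ acts transitively with compact stabilizer (as in all the applications here) but deserves a word of justification in the generality you claim.
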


\begin{proof}
Gromov defines different notions 
of Lipschitz volumes~\cite{gromov}*{Section~4.5}, denoted  by $\norm{V:\vol}_+$ and 
$\norm{V:\vol}_-$, which also make sense for Riemannian manifolds~$V$ with 
infinite volume. For a complete Riemannian manifold~$V$ of finite 
volume he states the 
inequality~\cite{gromov}*{third set of examples in Section~4.5} 
\[\norm{V:\vol}_-\le\frac{\norm{V}}{\vol(V)}\le\norm{V:\vol}_+.\]

The hypothesis of being \emph{stable at
infinity}~\cite{gromov}*{proportionality theorem of Section~4.5} is
always satisfied for complete Riemannian manifolds of finite volume
(see the Example in \emph{loc.~cit.}), and thus the theorem in
\emph{loc.~cit.} says that
\[\norm{N:\vol}_-=\norm{N:\vol}_+=\frac{\norm{M}}{\vol(M)}. \]
Therefore, the inequality above yields the assertion. 
\end{proof}

Since the proof of Gromov's theorem is not very detailed, we  
refer the reader also to the following 
result~\cite{clara-roman}, which one might use alternatively 
in Strategy~\ref{strategy} below.  

\begin{theorem}[Proportionality principle, non-positive curvature]
\label{thm:proportionality principle}
Let $M$ and $N$ be complete, non-pos\-i\-tive\-ly curved 
Riemannian manifolds of finite volume 
whose universal covers are isometric. Then 
\[\frac{\lipvol{M}}{\vol(M)}=\frac{\lipvol{N}}{\vol(N)}.\]
\end{theorem}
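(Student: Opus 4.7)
The plan is to follow Thurston's smearing construction, adapted to the locally finite Lipschitz setting. By symmetry in the hypothesis it suffices to prove $\lipvol{N}/\vol(N)\le\lipvol{M}/\vol(M)$. Denote the common universal cover by $X$, let $G=\isom(X)$ carry a Haar measure, and write $M=\Gamma\backslash X$ and $N=\Lambda\backslash X$ for the deck transformation groups $\Gamma,\Lambda\subset G$; normalize the Haar measure so that $\vol(\Gamma\backslash G)$ and $\vol(\Lambda\backslash G)$ equal $\vol(M)$ and $\vol(N)$ respectively, both finite by hypothesis.

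First, given any Lipschitz fundamental cycle $c$ of $M$, I would replace $c$ by its geodesic straightening $\str(c)$. In non-positive curvature, geodesic straightening does not increase the Lipschitz constants of the individual simplices, so $\str(c)$ is again a Lipschitz fundamental cycle of $M$ with $\abs{\str(c)}_1\le\abs{c}_1$, and each of its simplices admits canonical $G$-equivariant lifts to straight simplices on $X$.

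Next, I would define a smearing operator from straight Lipschitz chains on $M$ to locally finite straight measure chains on $N$: for each straight simplex $\sigma$ on $M$ with chosen lift $\widetilde\sigma\colon\Delta^n\to X$, associate the signed measure on the space of straight simplices on $N$ obtained by pushing forward the Haar measure on $\Lambda\backslash G$ under $g\mapsto\pi_N\circ g\circ\widetilde\sigma$. Extending linearly produces $\smear(c)$. Pairing with the volume form on $N$ via a routine degree computation shows that $\smear(c)$ represents $\vol(N)/\vol(M)$ times the fundamental class of $N$, while Fubini yields the norm bound
\[
\abs{\smear(c)}_1\;\le\;\frac{\vol(N)}{\vol(M)}\cdot\abs{c}_1.
\]

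To finish, I would invoke an isometric identification between locally finite Lipschitz straight measure homology and locally finite Lipschitz singular homology with respect to the $\ell^1$-semi-norm, which is the technology developed in the companion paper~\cite{clara-roman}, to replace $\smear(c)$ by a genuine locally finite Lipschitz singular fundamental cycle of $N$ of essentially the same $\ell^1$-norm. This gives $\lipvol{N}\le(\vol(N)/\vol(M))\cdot\lipvol{M}$ and hence, by symmetry, equality. The principal obstacle is showing that $\smear(c)$ is in fact \emph{locally finite}, not merely a signed measure of finite total variation. This is exactly where the Lipschitz constraint on $c$ combined with the finite-volume assumption on $N$ is essential: a uniform Lipschitz bound forces only a bounded measure of elements of $\Lambda\backslash G$ to send any given simplex into a fixed compact $K\subset N$, whereas without the Lipschitz control the smeared chain would accumulate mass near the cusps and fail to descend to an actual locally finite singular cycle.
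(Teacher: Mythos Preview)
The paper does not actually prove this theorem: immediately after stating it, the authors write that the result is taken from their companion paper~\cite{clara-roman} (``Degree theorems and Lipschitz simplicial volume\ldots''), and no argument is given here. So there is no in-paper proof to compare your proposal against.

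That said, your outline is the expected one and is almost certainly what~\cite{clara-roman} carries out: straighten (using non-positive curvature), smear over $\Lambda\backslash G$, and then pass from Lipschitz measure homology back to Lipschitz locally finite singular homology. You have also correctly isolated the genuine new difficulty in the non-compact Lipschitz setting, namely local finiteness of the smeared chain, and the reason the Lipschitz bound resolves it. Two small points worth tightening in a full write-up: first, geodesic straightening does not literally ``not increase'' Lipschitz constants, but it does bound the Lipschitz constant of $\str(\sigma)$ uniformly in terms of $\lipschitz(\sigma)$ (via the diameter of the vertex set), which is all you need; second, the isometric comparison between Lipschitz locally finite measure homology and Lipschitz locally finite singular homology is exactly the technical core you are deferring to~\cite{clara-roman}, so your sketch is not self-contained---but neither is the present paper, which defers to the same reference.
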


Our strategy to compute the simplicial volume for a 
locally symmetric 
space $M$ of non-compact type and finite volume consists of the following steps: 

\begin{strategy}\label{strategy}\hfill
\begin{itemize}
\item Show that $\norm{M}=\lipvol{M}$ under certain conditions. 
\item By a theorem of Borel~\cite{borelcompact}, 
there is a compact locally symmetric 
space~$N$ of non-compact type 
with the same universal covering as $M$; furthermore, both
$M$ and~$N$ are 
non-positively curved 
(see~\cite{eberlein}*{Sections~2.1 and~2.2}).
\item Use the positivity result of Lafont and Schmidt~\cite{ben-jean} to 
conclude $\norm{N}>0$, or use other available 
specific computations of $\norm{N}$. 
\item Apply Theorem~\ref{thm:gromov proportionality principle} 
or~\ref{thm:proportionality principle} and the equality 
of the first step to get positivity or an explicit computation 
of $\norm{M}$ from the one of $\norm{N}$. Notice that both
proportionality principles are applicable in view of the second step. 
\end{itemize}
\end{strategy}

\subsection{Relating simplicial volume and Lipschitz simplicial volume}\label{subsec:general theorems}

The following general result helps to verify the 
first step in the strategy above for certain examples. 
It is proved in Section~\ref{sec:prooflipvol}. 

\begin{theorem}\label{thm:Lipschitz volume equals simplicial volume}
Let $(M,d)$ be an open, Riemannian manifold. Assume that 
there is a compact submanifold $W\subset M$  
with boundary $\partial W=\coprod_{i=1}^mW_i$ and connected 
components $W_i$ satisfying the following conditions: 
\begin{enumerate}
\item The complement~$M-W$ and the disjoint
  union~$\coprod_{i=1}^m W_i \times (0,\infty)$ of cylinders are
  homeomorphic. 
\item Each~$W_i$ has 
a finite cover $\overline{W}_i\rightarrow W_i$ that has a self-map 
$f_i:\overline{W}_i\rightarrow\overline{W}_i$ with~$\deg f_i \not\in \{-1,0,1\}$ 
with the 
following property: Let $f_i^{(k)}$ denote the $k$-fold composition
of $f_i$. There is $C>0$ such that for every $k\ge 1$ the map 
\[ F_i^{(k)}:\overline{W}_i\times [0,1]\rightarrow \overline{W}_i\times
[k,k+1],~(x,t)\mapsto\bigl(f_i^{(k)}(x),t+k\bigr)\]  
has Lipschitz constant at most~$C$ with respect to the metric
on~$\overline W_i \times (0,\infty)$ induced by~$d$. 
\item Each $W_i$ has amenable fundamental group. 
\end{enumerate}
Then 
\[ \norm{W,\partial W}=\norm{M}=\lipvol{M}. \]
\end{theorem}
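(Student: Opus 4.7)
The plan is to prove the triple equality by establishing the chain
\[ \lipvol{M} \leq \norm{W,\partial W} \leq \norm{M} \leq \lipvol{M}, \]
where the last inequality is immediate because every Lipschitz locally finite fundamental cycle is, \emph{a fortiori}, a locally finite fundamental cycle.

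For $\norm{W,\partial W}\leq\norm{M}$, I would start with a locally finite fundamental cycle $c$ of~$M$ with $\abs{c}_1<\norm{M}+\epsilon$ and push~$c$ through the retraction $r\colon M\to W$ that collapses each cylinder $W_i\times(0,\infty)$ onto its base~$W_i\subset\partial W$. The image $r_* c$ is an $\ell^1$-chain in~$W$ of the same $\ell^1$-norm whose boundary is supported on $\partial W=\coprod_i W_i$ and is a fundamental cycle of each~$W_i$. Hypothesis~(c), the amenability of each $\pi_1(W_i)$, triggers Gromov's vanishing theorem $\norm{W_i}=0$, so the boundary of~$r_* c$ can be replaced by fundamental cycles of arbitrarily small $\ell^1$-norm modulo a correction in $W$, producing a bona fide relative fundamental cycle of $(W,\partial W)$ whose $\ell^1$-norm exceeds that of~$c$ by at most~$O(\epsilon)$.

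The harder direction $\lipvol{M}\leq\norm{W,\partial W}$ is where the self-maps enter. Take $\tilde c$ with $\abs{\tilde c}_1<\norm{W,\partial W}+\epsilon$, write $\tilde z_i=-\partial\tilde c|_{W_i}$, transfer it to a fundamental cycle $\bar z_i\in C_{n-1}(\overline W_i)$, and fix a chain $h_i\in C_n(\overline W_i\times[0,1])$ with
\[ \partial h_i \;=\; \tfrac{1}{d_i}(f_i)_*\bar z_i\times\{1\} \;-\; \bar z_i\times\{0\}, \qquad d_i := \abs{\deg f_i}\geq 2. \]
Form the telescope
\[ T_i \;=\; \sum_{j=0}^\infty \frac{1}{d_i^j}\bigl(F_i^{(j)}\bigr)_* h_i \;\in\; \Clf_n\bigl(\overline W_i\times(0,\infty)\bigr). \]
Because the factor $1/d_i$ in~$\partial h_i$ is exactly cancelled by the pushed copy $(f_i^{(j+1)})_*\bar z_i$ appearing at level~$j+1$, the interior boundary terms telescope and $\partial T_i=-\bar z_i\times\{0\}$. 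The geometric series $\sum d_i^{-j}$ converges, so $\abs{T_i}_1\leq\tfrac{d_i}{d_i-1}\abs{h_i}_1<\infty$, and the uniform Lipschitz bound~$C$ in hypothesis~(b) makes~$T_i$ a Lipschitz chain. Averaging $T_i$ down to $W_i\times(0,\infty)$ by $\tfrac{1}{[\overline W_i:W_i]}(p_i\times\id)_*$ and gluing onto~$\tilde c$ produces a Lipschitz locally finite fundamental cycle of~$M$.

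The principal obstacle is making the resulting $\ell^1$-estimate \emph{tight} instead of just up to a multiplicative constant. A naive prism-plus-filling for~$h_i$ gives $\abs{h_i}_1 = O(\abs{\bar z_i}_1)$, which only yields $\lipvol{M}\lesssim\norm{W,\partial W}$. To close the gap one has to exploit amenability twice over: first, to replace $\tilde c$ by a relative fundamental cycle whose boundary $\tilde z_i$ on each~$W_i$ has arbitrarily small $\ell^1$-norm, using $\norm{W_i}=0$ together with amenability-based filling estimates in~$W_i$; and second, to fill $\tfrac{1}{d_i}(f_i)_*\bar z_i-\bar z_i$ inside $\overline W_i\times[0,1]$ by a chain whose $\ell^1$-norm is controlled by that of its boundary, using that $\pi_1(\overline W_i\times[0,1])=\pi_1(\overline W_i)$ is amenable. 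With these refinements the telescope contributes only $O(\epsilon)$ to the total, and letting $\epsilon\to 0$ closes the inequality.
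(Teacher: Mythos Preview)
Your proposal is essentially the paper's proof: the telescope $\sum_{k\ge 0} d_i^{-k}(F_i^{(k)})_*(\,\cdot\,)$, the uniform boundary condition from amenability (Matsumoto--Morita) to control the fillings $h_i$, and Gromov's equivalence theorem---which is exactly your ``replace $\tilde c$ by a relative cycle with arbitrarily small boundary'' step---to pass from the estimate $\lvert\tilde c\rvert_1+\text{const}\cdot\lvert\partial\tilde c\rvert_1$ to $\norm{W,\partial W}$. Two minor points: the inequality $\norm{W,\partial W}\le\norm M$ needs no amenability at all (restricting a locally finite fundamental cycle to the compact $W$ and retracting already yields a relative fundamental cycle of no larger norm), and you should smooth the finitely many simplices on the compact part so that the glued cycle is genuinely Lipschitz.
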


\subsection{Simplicial volume of locally symmetric spaces of finite volume}
Next we present examples of locally symmetric spaces 
that satisfy the hypothesis of the previous theorem. 
In the sequel we focus on the case of higher $\bbR$-rank 
but we remark that it is not hard to verify the hypothesis 
for hyperbolic manifolds of finite volume or, 
more generally, locally symmetric spaces 
of $\bbR$-rank $1$. In particular, using 
Theorem~\ref{thm:gromov proportionality principle} 
or~\ref{thm:proportionality principle}, one obtains a new proof 
of Thurston's proportionality principle for 
hyperbolic manifolds~\cite{gromov}*{Section~0.4}. 

Let us first fix the following notation. 

\begin{setup}\label{setup:locally symmetric space}
Let $X$ be a connected symmetric space of non-compact type. 
Let $G=\isom(X)^\circ$ be the connected component of the identity in the 
isometry group of $X$. Then $G$ is a connected semi-simple Lie 
group with trivial center and no compact 
factors~\cite{eberlein}*{Proposition~2.1.1 on p.~69}. 
Let $\bfG$ be a connected, semi-simple, linear algebraic group 
with trivial center such that $G=\bfG(\bbR)$ (there 
always exists such $\bfG$). 
Assume that the~$\bbQ$-rank of~$\bfG(\bbQ)$ is~$1$, i.e., any  
maximal $\bbQ$-split torus of $\bfG(\bbQ)$ is one-dimensional. 
Let $\Gamma\subset\bfG(\bbQ)$ be a torsion-free, arithmetic lattice. 
By definition, one says that $\Gamma$ is a $\bbQ$-rank $1$ lattice. 
\end{setup}

Section~\ref{sec:review} gives a summary of the relevant notions 
about locally symmetric spaces. The following Theorem is proved 
in Section~\ref{sec:proofqrank}. 

\begin{theorem}\label{thm:q-rank 1 with amenable boundary satisfies assumption}
We retain the assumptions of Setup~\ref{setup:locally symmetric
  space}. Assume that $\Gamma$ in addition satisfies the 
following conditions: 
\begin{enumerate}
\item The lattice~$\Gamma$ is neat in the sense of
      Borel~\cite{oldborelbook}*{\S17}, and 
\item the connected components of the boundary of the Borel-Serre
      compactification of~$M=\Gamma\bs X$ have amenable fundamental
      groups.
\end{enumerate} 
Then $M$ satisfies the 
assumption of Theorem~\ref{thm:Lipschitz volume equals simplicial volume}. 
\end{theorem}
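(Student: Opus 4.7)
\emph{Proof proposal.} The plan is to verify the three hypotheses of Theorem~\ref{thm:Lipschitz volume equals simplicial volume} for $M=\Gamma\bs X$ by using the geometric structure of the Borel--Serre compactification~$\overline M$. Since $\Gamma$ is neat, $\overline M$ is a compact smooth manifold with corners whose faces are indexed by the $\Gamma$-conjugacy classes of proper rational parabolic subgroups of~$\bfG$; in $\bbQ$-rank~$1$ all such parabolics are minimal, so $\overline M$ is in fact a manifold with boundary $\partial\overline M=\coprod_{i=1}^m W_i$. Taking $W$ to be the complement in $\overline M$ of an open collar of the boundary produces a compact submanifold whose complement is a disjoint union of cylinders $W_i\times(0,\infty)$, giving~(a). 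Condition~(c) is precisely our hypothesis on the fundamental groups of the~$W_i$.

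For condition~(b) I would work in horospherical coordinates at each cusp. Fix a parabolic $\bfP=\bfP_i$ with Langlands decomposition $\bfP(\bbR)=N\cdot A\cdot M$, where $A$ is the one-dimensional $\bbQ$-split part and $M$ is the $\bbQ$-anisotropic reductive complement; write $X_M=M/K_M$. Horospherical coordinates identify $X$ with $N\times A\times X_M$, and the $A$-coordinate~$t$ identifies the corresponding end of~$M$ with $W_i\times(0,\infty)$, where $W_i\cong\Gamma_P\bs(N\times X_M)$ is a fibre bundle over the compact locally symmetric space $\Gamma_M\bs X_M$ (compact because $\bfM$ is $\bbQ$-anisotropic) with nilmanifold fibre $\Gamma_N\bs N$.

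The self-map~$f_i$ I would use is induced by conjugation by an element $a\in A$ with $t$-coordinate~$1$: on horospherical coordinates it sends $[n,x]\mapsto[ana^{-1},x]$. Since $a$ normalises~$N$ and centralises~$M$ and hence~$X_M$, this descends to a self-covering of~$W_i$ after possibly replacing~$a$ by a positive power and passing to a finite cover $\overline W_i\to W_i$ in order to ensure that $a\Gamma_Na^{-1}$ has finite index in~$\Gamma_N$. Its degree equals
\[
[\Gamma_N:a\Gamma_Na^{-1}]=\det\ad(\log a)\bigr|_{\operatorname{Lie}(N)}=\prod_\alpha e^{\alpha(\log a)\dim N_\alpha},
\]
where the product runs over the positive roots of~$(\bfG,\bfA)$; this is strictly greater than~$1$ and a suitable power of~$a$ makes it~$\geq 2$. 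The Lipschitz estimate then reduces to a metric computation. In horospherical coordinates the symmetric-space metric on the cusp takes the standard form $g_X=dt^2+g_{X_M}+\sum_\alpha e^{-2\alpha(t)}g_{N,\alpha}$, derived from the $A$-root space decomposition of $\operatorname{Lie}(N)$, and the map $F_i^{(k)}(n,x,t)=(a^kna^{-k},x,t+k)$ pulls the metric at height $t+k$ back to the metric at height~$t$: on each root space the expansion factor~$e^{k\alpha}$ produced by $a^k$-conjugation cancels exactly the contraction factor~$e^{-k\alpha}$ coming from the shift $t\mapsto t+k$, while the $X_M$- and $t$-directions are preserved. Thus $F_i^{(k)}$ is a fibrewise isometry, so its Lipschitz constant is~$1$ independent of~$k$.

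The main obstacle is the geometric bookkeeping needed to make the preceding picture globally and equivariantly correct: one has to describe the Borel--Serre collar precisely enough that the cylinder identification is an honest diffeomorphism, verify that $[n,x]\mapsto[ana^{-1},x]$ descends to the required self-map of~$\overline W_i$ (the $\Gamma_P$-equivariance forces a careful choice of~$a$ and of the intermediate cover), and justify the horospherical metric formula from the Killing form and the root space structure. Once these points are settled, both the degree bound and the Lipschitz estimate are one-line consequences of the $A$-eigenvalue decomposition of~$\operatorname{Lie}(N)$.
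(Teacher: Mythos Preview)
Your proposal is correct and follows essentially the same route as the paper: verify (a) and (c) from the Borel--Serre cusp decomposition, and for (b) use conjugation by an element of the split torus $A_\bfP$ as the self-map, with the Lipschitz bound coming from the root-space form of the horospherical metric. Two small points where the paper is sharper: (i) the intermediate finite cover is taken to be $\Lambda_\bfP\bs(N_\bfP\times X_\bfP)$ with $\Lambda_\bfP=\Gamma_{N_\bfP}(\Gamma\cap M_\bfP)$, after choosing the basepoint so that $A_\bfP$ and $M_\bfP$ are $\bbQ$-defined, which makes the commutation with $A_\bfP$ transparent; (ii) your element $a$ with ``$t$-coordinate~$1$'' will generally \emph{not} satisfy $a\Gamma_{N_\bfP}a^{-1}\subset\Gamma_{N_\bfP}$, and no positive power fixes this if $a^{\alpha_0}$ is not algebraic of the right type---the paper instead invokes a lemma of Dekimpe and Lee to produce an integer $\lambda>1$ for which the graded automorphism $\phi_\lambda$ of $\frakn_\bfP$ preserves the $\bbZ$-lattice, and takes $a_0$ with $a_0^{\alpha_0}=\lambda$. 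Your Lipschitz computation (the map is an isometry on each root space) is in fact a shade cleaner than the paper's, which records a contraction; either conclusion suffices.
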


Applying the Strategy~\ref{strategy}, we obtain the following
positivity result:

\begin{corollary}\label{cor:positivity}
Under the hypotheses of the previous theorem we have 
$\norm{\Gamma\bs X}>0$. 
\end{corollary}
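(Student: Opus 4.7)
The plan is to follow Strategy~\ref{strategy} verbatim; once Theorem~\ref{thm:q-rank 1 with amenable boundary satisfies assumption} is available, the corollary is a direct assembly of inputs.

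First I would invoke Theorem~\ref{thm:q-rank 1 with amenable boundary satisfies assumption} to conclude that $M = \Gamma\backslash X$ satisfies the hypotheses of Theorem~\ref{thm:Lipschitz volume equals simplicial volume}. Applying the latter yields the identity $\norm{M} = \lipvol{M}$, which is the crucial bridge: it reduces the positivity question for the (a priori badly behaved) ordinary simplicial volume of an open manifold to that of its Lipschitz variant, where a proportionality principle is available.

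Next I would produce the compact companion manifold. By Borel's theorem~\cite{borelcompact}, there exists a torsion-free, cocompact, arithmetic lattice $\Gamma' \subset G = \isom(X)^\circ$; set $N \defq \Gamma'\bs X$. Then $N$ is a closed locally symmetric space of non-compact type with the same universal cover $X$ as $M$, and both $M$ and $N$ are non-positively curved (see~\cite{eberlein}*{Sections~2.1 and~2.2}). Since $N$ is closed, $\lipvol{N} = \norm{N}$, and by the positivity theorem of Lafont and Schmidt~\cite{ben-jean} we have $\norm{N} > 0$, hence $\lipvol{N} > 0$.

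Finally I would apply the proportionality principle. Since $M$ and $N$ have isometric universal covers and $N$ is closed while $M$ is complete of finite volume, Theorem~\ref{thm:gromov proportionality principle} (or equivalently Theorem~\ref{thm:proportionality principle}, using that both manifolds are non-positively curved) gives
\[
\frac{\lipvol{M}}{\vol(M)} \;=\; \frac{\lipvol{N}}{\vol(N)} \;>\; 0.
\]
Since $\vol(M)$ is finite, this forces $\lipvol{M} > 0$, and combining with the first step gives $\norm{M} = \lipvol{M} > 0$, as required. There is no real obstacle here: all the difficulty has been absorbed into Theorems~\ref{thm:gromov proportionality principle},~\ref{thm:Lipschitz volume equals simplicial volume}, and~\ref{thm:q-rank 1 with amenable boundary satisfies assumption}, and into the Lafont--Schmidt positivity theorem; the corollary itself is a one-line deduction from these.
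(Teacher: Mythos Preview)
Your proposal is correct and follows exactly the approach the paper intends: the corollary is stated immediately after Strategy~\ref{strategy} with the remark ``Applying the Strategy~\ref{strategy}, we obtain the following positivity result,'' and your argument is a faithful execution of that strategy. No separate proof is given in the paper beyond this reference, so your write-up in fact supplies more detail than the original.
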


\subsection{Simplicial volume of Hilbert modular varieties}
Consider a totally real number field $F$ of degree $d$ over 
$\bbQ$, that is, $F$ admits no complex embedding. Let $\calo_F$ 
be its ring of integers. Let $\{\sigma_1,\ldots,\sigma_d\}$ be the 
set of all embeddings $\calo_F\rightarrow\bbR$. 

Then $\mathrm{SL}(2, \calo_F)$ is a $\bbQ$-rank $1$ 
lattice in $\mathrm{SL}(2,\bbR)^d$ via the embedding 
\[A\mapsto\bigl(\sigma_1(A),\ldots,\sigma_d(A)\bigr).\] 
In particular, $\mathrm{SL}(2, \calo_F)$ acts on the 
$d$-fold product $\bbH^2\times\dots\times\bbH^2$ of 
hyperbolic planes. 

If $\Gamma\subset \mathrm{SL}(2, \calo_F)$ is a torsion-free, finite
index subgroup then we call the quotient~$\Gamma\bs (\bbH^2)^d$ a
\emph{Hilbert modular variety}.  In case $F$ is real quadratic, i.e.,
$F=\bbQ(\sqrt{a})$ for a square-free positive integer $a$, we call
$\Gamma\bs \bbH^2\times\bbH^2$ a \emph{Hilbert modular surface}.

If $\Gamma$ is neat then $\Gamma\bs (\bbH^2)^d$ satisfies 
the assumption of Theorem~\ref{thm:q-rank 1 with amenable boundary
  satisfies assumption}~\cite{borelbook}*{III.2.7}. 
Further, every torsion-free lattice has a neat subgroup of 
finite index~\cite{oldborelbook}*{\S 17}. Thus, by
Corollary~\ref{cor:positivity} and the volume estimate  
in Section~\ref{subsec:minimal volume} we obtain: 

\begin{corollary}\label{cor:hilbert modular varieties}
Hilbert modular varieties have positive simplicial and positive  
minimal volume. 
\end{corollary}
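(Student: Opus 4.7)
The plan is to reduce to the case where the lattice is neat, invoke the machinery already assembled in the paper, and then pass the positivity back up. Concretely, write $M = \Gamma \backslash (\bbH^2)^d$ for the Hilbert modular variety at hand. By the cited result \cite{oldborelbook}*{\S17}, the torsion-free lattice $\Gamma$ contains a neat subgroup $\Gamma'$ of finite index. Setting $M' = \Gamma' \backslash (\bbH^2)^d$, the natural map $M' \to M$ is a finite covering of orientable manifolds, so by multiplicativity of the simplicial volume under finite coverings (valid both in the compact and locally finite setting, and used throughout the paper) we have $\norm{M'} = [\Gamma:\Gamma'] \cdot \norm{M}$. Hence it suffices to prove $\norm{M'} > 0$, that is, we may assume that $\Gamma$ itself is neat.

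With $\Gamma$ neat, the paper has just pointed out that Setup~\ref{setup:locally symmetric space} is satisfied (take $X = (\bbH^2)^d$, the ambient group $\bfG$ being a $\bbQ$-form of $\mathrm{SL}(2)^d$ with $\qrank = 1$) and that by \cite{borelbook}*{III.2.7} the boundary components of the Borel--Serre compactification of $M$ have amenable (in fact virtually abelian, coming from unipotent radicals of rational parabolics) fundamental groups. Therefore both hypotheses (a) and (b) of Theorem~\ref{thm:q-rank 1 with amenable boundary satisfies assumption} are met, and Corollary~\ref{cor:positivity} yields $\norm{M} > 0$.

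Positivity of the minimal volume then follows from the Gromov inequality
\[
\norm{M} \le (n-1)^n n! \cdot \minvol(M)
\]
recalled in Section~\ref{subsec:minimal volume}: combining this with $\norm{M} > 0$ gives $\minvol(M) > 0$.

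The proof is thus essentially a bookkeeping step; no genuine obstacle arises here, since all the analytic and geometric content has been packaged into Theorems~\ref{thm:Lipschitz volume equals simplicial volume} and~\ref{thm:q-rank 1 with amenable boundary satisfies assumption}, into Corollary~\ref{cor:positivity}, and into the volume estimate. The only point that requires a moment of care is the passage from a neat finite-index subgroup back to the original $\Gamma$, which relies on the standard multiplicativity of the (locally finite) simplicial volume under finite Riemannian covers — this is automatic here because $M'\to M$ is a finite covering of oriented, finite-volume manifolds.
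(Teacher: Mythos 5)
Your proof is correct and follows essentially the same route as the paper: pass to a neat finite-index subgroup using multiplicativity under finite covers, verify the hypotheses of Theorem~\ref{thm:q-rank 1 with amenable boundary satisfies assumption} via the Borel--Ji reference, apply Corollary~\ref{cor:positivity}, and invoke the Gromov volume inequality. The paper states this more tersely (the text immediately preceding the corollary is in effect its proof), and you merely spell out the bookkeeping, notably the reduction to the neat case, in more detail.
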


To our knowledge, this is the first class of 
examples of non-compact 
locally symmetric spaces of $\bbR$-rank at least~$2$ for which 
positivity of simplicial or minimal volume is known. 

Bucher-Karlsson proved~\cite{michelle} that if the Riemannian
universal covering of a closed Riemannian manifold~$N$ is isometric
to~$\bbH^2\times\bbH^2$, then
$\norm{N}=\frac{3}{2\pi^2}\vol(N)$. Thus, applying
Strategy~\ref{strategy}, yields following explicit computation.

\begin{corollary}\label{cor:hilbert modular surface}
Let $\Sigma$ be a Hilbert modular surface. Then 
\[\norm{\Sigma}=\frac{3}{2\pi^2}\vol(\Sigma). \]
\end{corollary}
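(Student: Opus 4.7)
The plan is a direct application of Strategy~\ref{strategy}, with the proportionality constant supplied by Bucher-Karlsson. First I would reduce to the case of a neat lattice: since every torsion-free lattice in $\mathrm{SL}(2,\calo_F)$ has a neat subgroup of finite index, and since both $\norm{\cdot}$ and $\vol$ are multiplicative with respect to finite coverings, the identity $\norm{\Sigma}=\tfrac{3}{2\pi^2}\vol(\Sigma)$ for some neat finite cover of $\Sigma$ implies the identity for $\Sigma$ itself. So I may assume $\Sigma=\Gamma\bs(\bbH^2)^2$ with $\Gamma$ neat.

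Next I would invoke the machinery assembled in Sections~\ref{subsec:general theorems} and~\ref{subsec:computations}. Because $\Gamma$ is neat, \cite{borelbook}*{III.2.7} ensures that the connected components of the boundary of the Borel--Serre compactification of $\Sigma$ have amenable fundamental groups. Hence Theorem~\ref{thm:q-rank 1 with amenable boundary satisfies assumption} applies and shows that $\Sigma$ satisfies the hypotheses of Theorem~\ref{thm:Lipschitz volume equals simplicial volume}. Consequently
\[
\norm{\Sigma}=\lipvol{\Sigma}.
\]

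Then I would bring in a compact comparison manifold $N$: by Borel's theorem \cite{borelcompact} there exists a closed locally symmetric space $N$ of non-compact type whose universal cover is again $\bbH^2\times\bbH^2$; in particular $N$ is non-positively curved, and so is $\Sigma$. Since $N$ is closed, $\norm{N}=\lipvol{N}$, and the Bucher-Karlsson computation \cite{michelle} gives $\lipvol{N}/\vol(N)=3/(2\pi^2)$. The proportionality principle for non-positively curved spaces (Theorem~\ref{thm:proportionality principle}) applied to $\Sigma$ and $N$ now yields
\[
\frac{\norm{\Sigma}}{\vol(\Sigma)}=\frac{\lipvol{\Sigma}}{\vol(\Sigma)}=\frac{\lipvol{N}}{\vol(N)}=\frac{3}{2\pi^2},
\]
which is the claim.

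There is no real obstacle here, as every nontrivial ingredient is already in place: all that is required is to check that the reductions (passing to a neat cover, invoking \cite{borelbook}*{III.2.7} for amenable boundary) are legitimate so that Theorems~\ref{thm:Lipschitz volume equals simplicial volume} and~\ref{thm:proportionality principle} apply. The one cautionary point is to confirm that passing to a neat finite cover preserves both sides of the equality, which is immediate from multiplicativity of $\norm{\cdot}$ and $\vol$ under finite coverings.
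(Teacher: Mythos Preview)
Your proof is correct and follows exactly the approach the paper intends: it is a spelled-out application of Strategy~\ref{strategy}, including the reduction to a neat cover, the verification of the hypotheses of Theorems~\ref{thm:q-rank 1 with amenable boundary satisfies assumption} and~\ref{thm:Lipschitz volume equals simplicial volume}, and the use of Bucher-Karlsson's constant via the proportionality principle. The paper's own justification is simply the sentence ``applying Strategy~\ref{strategy}'' together with the preceding discussion for Hilbert modular varieties, so your write-up just makes these steps explicit.
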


\begin{acknow}
The first author would like to thank the Graduiertenkolleg
``Analytische Topologie und Metageometrie'' at the WWU M\"unster for
its financial support.  The second named author would like to thank
Uri Bader for numerous discussions about Lie groups and symmetric
spaces, which helped a lot in the preparation of this paper. The second
named author acknowledges support of the German Science Foundation
(DFG), made through grant SA 1661/1-1.
\end{acknow}

%%%%%%%%%%%%%%%%%%%%%%%%%%%%%%%%%%%%%%%%%%%%%%%%%%%%%%%%%%%%%%%%%%%%%%%
\section{Proof of Theorem~\ref{thm:Lipschitz volume equals simplicial volume}}
\label{sec:prooflipvol}

\begin{proof}[Proof of Theorem~\ref{thm:Lipschitz volume equals
  simplicial volume}]
We first show that~$\norm{W, \partial W} \leq \norm{M} \leq \lipvol M$: By
definition, $\norm M \leq \lipvol M$. On the other hand, restricting a
locally finite fundamental cycle~$\ucov c$ of~$M$ to the submanifold~$W$, and
pushing the resulting finite chain down to~$W$ via the obvious
homotopy equivalence~$M \rightarrow W$ gives rise to a relative
fundamental cycle~$c$ of~$(W, \partial W)$; by construction, $\abs{c}_1 \leq
\abs{\ucov c}_1$. Therefore, $\norm{W,\partial W} \leq \norm{M}$.

It remains to prove the 
estimate~$\norm{W,\partial W} \geq \lipvol M$:
Let $c\in C_n(W)$ be a relative fundamental cycle of $(W,\partial W)$.
We now proceed in several steps: First, we use the given finite
coverings and self-maps on the boundary components to extend~$c$ to a
locally finite chain~$\ucov c$ on~$M$. In the second step, we show
that $\ucov c$ is a fundamental cycle of~$M$. By refining the
construction in the third step, we can build~$\ucov c$ in such a way
that is has small $\ell^1$-norm. In the last step, we show that
$\abs{\ucov c}_1$ is indeed small enough, by applying Gromov's
equivalence theorem. 

\medskip

\noindent\emph{Construction of a 
locally finite fundamental cycle~$\ucov c$ of~$M$ from~$c$.}
In the following, we write~$n = \dim M$.  
For $i\in\{1,\ldots, m\}$ let $p_i:\overline{W}_i\rightarrow W_i$
denote the given finite covering. Let $d_i\not\in\{0,-1,1\}$ be the
degree of the self-map~$f_i:\overline{W}_i\rightarrow\overline{W}_i$.
The chain
\[ z_i = (\partial c)\vert_{W_i}\in C_{n-1}(W_i). \]
is a fundamental cycle of~$W_i$ and $\partial c=\sum_iz_i$.  Let
$\overline z_i \in C_{n-1}(\overline W_i)$ be the image of~$z_i$ under the
transfer homomorphism~$C_\ast(W_i)\rightarrow C_\ast(\overline{W}_i)$
corresponding to~$p_i$. Then $\overline z_i$ is a cycle representing
$1/\abs{\deg p_i}$ times the fundamental class of~$\overline W_i$ and
satisfying $\abs{\overline z_i}_1\le \abs{z_i}_1$ as well as
$C_{n-1}(p_i)(\overline z_i)= z_i$.  Because $f_i$ has degree $d_i$,
we can find a chain $\overline b_i\in C_{n}(\overline W_i)$ such that
\begin{equation}\label{eq:boundary}
\partial \overline b_i=1/d_i\cdot C_{n-1}(f_i)(\overline z_i) - \overline z_i. 
\end{equation}
In the third step of the proof, we will specify a suitable choice
of~$\overline b_i$.

There is a chain~$\overline u_i\in C_n(\overline W_i\times [0,1])$ with
\[ \partial \overline u_i = j_1(\overline z_i)-j_0(\overline z_i)
   \text{ and }
   \abs{\overline u_i}_1\le n \cdot \abs{\overline z_i}_1,
\]
where $j_k : C_\ast(\overline W_i) \rightarrow C_\ast(\overline W_i
\times [0,\infty))$ denotes the map induced by the inclusion~$\overline
W_i \hookrightarrow \overline W_i \times \{k\} \hookrightarrow
\overline W_i \times [0,\infty)$. For example, such a~$\overline u_i$
can be found by triangulating the prism $\Delta^{n-1}\times [0,1]$
into $n$-simplices. Using the abbreviations
\begin{align*}
\overline w_i& = \overline u_i+j_1(\overline b_i)\in C_n\bigl(\overline{W}_i\times [0,1]\bigr),\\
\overline c_i& = \sum_{k\ge 0}\frac{1}{d_i^k}\cdot
     C_n\bigl(F_i^{(k)}\bigr)(\overline w_i)\in\Clf_n\bigl(\overline{W}_i\times [0,\infty)\bigr),
\end{align*}  
we define the (locally finite) chain
\begin{equation}\label{eq:locally finite chain}
\ucov c = c+\sum_{i=1}^m C_n(p_i)(\overline c_i)\in\Clf_n(M).
\end{equation}

Taking advantage of a smoothing
operator~\cite{loehmh}*{Proposition~5.3} from the singular chain
complex to the subcomplex generated by all smooth simplices, shows
that we can assume without loss of generality that all the simplices
occurring in the chains~$\overline w_i$ and~$c$ are smooth, and hence
Lipschitz; in particular, all simplices in~$\ucov c$ are Lipschitz.

\medskip

\noindent\emph{Properties of the chain $\ucov c$.}
The chain~$\ucov c$ is a cycle because
\begin{align*}
\partial \overline c_i
  &= \sum_{k\ge 0}
     \frac{1}{d_i^k}\cdot 
     C_{n-1}\bigl(F_i^{(k)}\bigr) (\partial \overline w_i)
     \\
  &= \sum_{k\ge 0}
     \frac{1}{d_i^k}\cdot 
     C_{n-1}\bigl(F_i^{(k)}\bigr)
            \bigl( j_1(\overline z_i)
                 - j_0(\overline z_i)
                 + j_1(\partial \overline b_i)
            \bigr)\\
  &= \sum_{k\ge 0}
     \frac{1}{d_i^k}\cdot 
     C_{n-1}\bigl(F_i^{(k)}\bigr)
            \Bigl( \frac{1}{d_i} \cdot 
                   j_1 \circ C_{n-1}(f_i)(\overline z_i)
                 - j_0(\overline z_i)
            \Bigr)\\
  &= \sum_{k\ge 0}
     \frac{1}{d_i^{k+1}}\cdot 
     j_{k+1} \circ C_{n-1}\bigl(f_i^{(k+1)}\bigr)(\overline z_i)
   - \sum_{k\ge 0}
     \frac{1}{d_i^k}\cdot 
     j_k \circ C_{n-1}\bigl(f_i^{(k)}\bigr)(\overline z_i)\\
  &= - \overline z_i. 
\end{align*}
For every~$x$ in the interior of $W$, the chains~$c$ and~$\ucov c$ give
rise to the same class in~$H_n(M, M - \{x\})$. Therefore, the
cycle~$\ucov c$ is a fundamental cycle of~$M$.
Moreover, $\lipschitz(c)<\infty$ follows directly from the 
uniform Lipschitz hypothesis for~$(F_i^{(k)})_{k \in\bbN}$. 

\medskip

\noindent\emph{Uniform boundary condition from amenability.} 
Because $\pi_1(W_i)$ is amenable, the finite index
subgroup~$\pi_1(\overline W_i)$ is also amenable. Thus, the chain
complex~$C_\ast(\overline W_i)$ satisfies the uniform boundary
condition of Matsumoto and Morita~\cite{morita}*{Theorem~2.8} in
degree~$n - 1$ (in fact, in every degree), i.e., there is a
constant~$K>0$ such that for any null-homologous cycle~$\overline z\in
C_{n-1}(\overline W_i)$ there is a chain $\overline b\in
C_{n}(\overline W_i)$ with $\partial \overline b=\overline z$ and
$\abs{\overline b}_1\le K\cdot\abs{\overline z}_1$. By taking the
maximum we may assume that we have the same constant~$K$ for
all~$i\in\{1,\ldots, m\}$.  Therefore, we can choose the~$\overline
b_i$ above in such a way that 
\begin{equation*}
\abs{\overline b_i}_1
           \le K\cdot\bigl| 1/d_i\cdot C_{n-1}(f_i)(\overline z_i)
                          - \overline z_i
                     \bigr|_1 
           \le 2K\cdot \abs{\overline z_i}_1,
\end{equation*}
and again -- by smoothing~$\overline b_i$ -- we may assume that all
simplices occurring in~$\overline b_i$ are smooth. 
This yields 
\begin{align*}
\abs{\ucov c}_1
&\le\abs{c}_1+\sum_{i=1}^m\abs{\overline c_i}_1\\
&\le\abs{c}_1+\sum_{i=1}^m\sum_{k\ge 0}
  \frac{1}{d_i^k}\cdot\bigl(\abs{\overline u_i}_1+\abs{j_1(\overline b_i)}_1\bigr)\\
&\le\abs{c}_1+\sum_{i=1}^m\sum_{k\ge 0}
  \frac{2K+n}{d_i^k}\cdot\abs{\overline z_i}_1\\
&\le\abs{c}_1+2m(2K+n)n\cdot \abs{\partial c}_1.
\end{align*}

\medskip

\noindent\emph{Applying Gromov's equivalence theorem.}
From the discussion above we obtain that 
\[\lipvol{M}\le\inf\bigl\{\abs{c}_1+2m(2K+n)n\cdot\abs{\partial
  c}_1;~\text{$c\in C_n(W)$ relative fundamental cycle}
\bigr\}.\]
Then the amenability of each $\pi_1(W_i)$ and 
Gromov's equivalence theorem~\cite{gromov}*{p.~57} imply that the right hand 
side is equal to $\norm{W,\partial W}$. 
\end{proof}

%%%%%%%%%%%%%%%%%%%%%%%%%%%%%%%%%%%%%%%%%%%%%%%%%%%%%%%%%%%%%%%%%%%%%%%
\section{Review of cusp decomposition in $\bbQ$-rank
  $1$}\label{sec:review}  

\noindent%
We refer to the Setup~\ref{setup:locally symmetric space}. 
Our background reference is the book of Borel and
Ji~\cite{borelbook}. As in this book, we stick to the following notation:

\subsection*{Convention} \textit{An algebraic group defined over 
$\bbQ$ or $\bbR$ is denoted by a bold face letter and its 
group of real points by the corresponding Roman capital. 
Parabolic subgroups are always assumed to be proper and 
usually denoted by $\bfP$.}

\subsection{The Borel-Serre compactification -- simply connected
  case}\label{subsec:horospherical coordinates} 
In the \mbox{$\bbQ$-rank} one case, 
the 
\emph{Borel-Serre compactification $\overline{X}$} of the symmetric space~$X$ (the one 
of the locally symmetric space~$\Gamma\bs X$ will then be
$\Gamma\bs\overline{X}$, cf.~Section~\ref{subsec:the quotient description}) 
is of the form 
\[\overline{X}=X\cup\coprod_{\bfP}e(\bfP),\]
where $\bfP$ runs over all rational parabolic subgroups of $\bfG$, 
and the boundary component $e(\bfP)$ is a principal bundle over a symmetric 
space $X_\bfP$ with the unipotent radical $N_\bfP$ of $P$ as fiber. 
If one fixes a basepoint $x_0\in X$ and thus an identification 
$X=G/K$ (with $K=G_{x_0}$ maximal compact), then one can describe $e(\bfP)$ as 
follows~\cite{borelbook}*{Section~III.9}: Let $P=N_\bfP A_\bfP M_\bfP$ 
be the \emph{rational Langlands decomposition} of $P$ with respect to $x_0$: 
Here $A_{\bfP}$ is a stable lift of the identity component of the real locus 
of the maximal $\bbQ$-split torus in the Levi-quotient $\bfN_\bfP\bs\bfP$, 
and $M_\bfP$ is a stable lift of the real locus of the complement of this torus in 
$\bfN_\bfP\bs\bfP$. Write $K_\bfP=K\cap M_\bfP$ and
$\Gamma_P=\Gamma\cap P$ 
and $\Gamma_{N_\bfP}=\Gamma\cap N_\bfP$. 
Then $X_\bfP=M_\bfP/K_\bfP$ and $e(\bfP)=N_\bfP \times X_\bfP$. 
The map~\cite{borelbook}*{Section~III.1, p.~272-275} 
\begin{equation*}\label{eq:horospherical coordinates}
\mu_{x_0}: N_\bfP \times A_\bfP\times X_\bfP\xrightarrow{\cong}
X,~(n,a,mK)\mapsto nam\cdot x_0
\end{equation*}
is a diffeomorphism. If $\mu_{x_0}(n,a,z)=x$ then one calls $(n,a,z)$ 
the \emph{horospherical coordinates} of $x\in X$. 

\subsection{The rational root space decomposition}\label{subsec:root
  spaces} 
Let $\frakg$ be the Lie algebra of~$G$.  Let $\fraka_\bfP$ and
$\frakn_\bfP$ be the Lie algebra of $A_\bfP$ and $N_\bfP$,
respectively. Associated with~$(\frakg, \fraka_\bfP)$ is the system
$\Phi(\frakg, \fraka_\bfP)$ of $\bbQ$-roots. For
$\alpha\in\Phi(\frakg, \fraka_\bfP)$ we write
\[\frakg_\alpha=\{Z\in\frakg;\ \forall_{H\in\fraka_\bfP}\ad(H)(Z)=\alpha(H)Z
\}.\]
Furthermore, if $\Phi^+(\frakg,\fraka_\bfP)$ denotes the subset
of positive roots, then we have
\begin{equation*}\label{eq:root decomposition}
\frakn_\bfP=\bigoplus_{\alpha\in\Phi^+(\frakg,\fraka_{\bfP})}\frakg_\alpha.
\end{equation*}
Because $\bfG(\bbQ)$ has 
$\bbQ$-rank $1$ there is only one positive simple root
$\alpha_0\in\Phi^+(\frakg,\fraka_\bfP)$,  
which defines a group isomorphism  
\begin{equation*}\label{eq:parametrizing the split component}
A_\bfP\xrightarrow{\cong}\bbR^\ast_{>0},~a\mapsto
a^{\alpha_0}\defq\mathrm{exp}\circ \alpha_0\bigl(\log(a)\bigr).   
\end{equation*}
We remark that 
the root decomposition above gives a grading of $\frakn_\bfP$, and 
the adjoint action of $a\in A_\bfP$ on $\frakn_\bfP$ is given by 
multiplication with $\lambda^k$ on $\frakg_{k\alpha_0}$, where 
$\lambda=a^{\alpha_0}$. 

\subsection{The ends, metrically}\label{subsec:neigborhood and metric}
For any $t\ge 0$ let $A_{\bfP,t}=\{a\in A_\bfP;~\alpha_0(\log a)\ge t\}$. 
Then 
\[V_\bfP(t)=\mu_{x_0}\bigl (N_\bfP\times A_{\bfP,t}\times X_\bfP\bigr)\]
together with $e(\bfP)$ 
defines a neighborhood of $e(\bfP)$ for any $t\ge 0$. 
Next we describe 
the metric on $V_\bfP(t)$ in horospherical
coordinates~\cite{borelbook}*{Lemma~III.20.7 on p.~402}:  
The tangent spaces 
of the three submanifolds $N_\bfP \times\{a\}\times\{z\}$ and 
$\{n\}\times A_{\bfP,t}\times\{z\}$ and 
$\{n\}\times\{a\}\times X_\bfP$ 
at every point $(n,a,z)$ are orthogonal. 
Let $dx^2$, $da^2$, and $dz^2$ be the invariant 
metrics on $X$, $A_\bfP$, and $X_\bfP$, respectively, induced 
from the Killing form. Then we have 
\begin{equation*}\label{eq:cusp metric}
dx^2=
\sum_{a\in\Phi^+(\frakg,\fraka_\bfP)}e^{-2\alpha(\log(a))}h^\alpha(z)\oplus 
da^2\oplus dz^2,
\end{equation*}
where $h^\alpha(z)$ is some metric on
$\frakg_\alpha$ that smoothly depends on $z$ but is independent of~$a$.

\subsection{The Borel-Serre compactification -- locally symmetric
  case}\label{subsec:the quotient description} 
Let $S$ denote a set of representatives of 
$\Gamma$-conjugacy classes of rational parabolic subgroups. 
There is a free $\Gamma$-action on $\overline{X}$ extending that of 
$X$, and the quotient~$\Gamma\bs\overline{X}$ is the 
\emph{Borel-Serre compactification} of the locally symmetric space 
$\Gamma\bs X$. The $\Gamma$-stabilizer of $e(\bfP)$, and also 
of $V_\bfP(t)$ for~$t\gg1$, is $\Gamma_P$, and two
points in $V_\bfP(t)$ lie in the same $\Gamma$-orbit if and only if 
they lie in the same $\Gamma_P$-orbit. 
It turns out that
\begin{equation*}
\Gamma\bs\overline{X}=\Gamma\bs X\cup\coprod_{\bfP\in S}\Gamma_P\bs e(\bfP).
\end{equation*}
Moreover, for $t\gg 1$ we have (in horospherical coordinates with
respect to a basepoint) 
\begin{equation*}
\Gamma_P\bs V_\bfP(t)=\Gamma_P\bs\bigl (N_\bfP\times A_{\bfP,t}\times
X_\bfP\bigr)=\Gamma_P\bs\bigl (N_\bfP\times X_\bfP\bigr)\times A_{\bfP,t}.
\end{equation*}
There is $t_0>0$ such that for all $t>t_0$ the subsets 
$(\Gamma_P\bs V_\bfP(t))_{\bfP\in S}$, of $\Gamma\bs X$ are disjoint and 
the complement $\Gamma\bs X-\bigcup_{\bfP\in S}\Gamma_P\bs V_\bfP(t)$ is 
a compact submanifold with boundary. 
For $t>t_0$ we say that $\Gamma_P\bs V_\bfP(t)$ is a
\emph{cusp}. The union of all cusps together with the boundary 
of $\Gamma\bs\overline{X}$ is a collar neighborhood 
of the same boundary~\cite{lizhen}*{Proposition~2.3 and~2.4}. 

\subsection{The boundary of the Borel-Serre compactification and neatness}\label{subsec:fiber bundle}
Denote the image of $\Gamma_P$ under the projection 
$P=N_\bfP A_{\bfP}M_{\bfP}\rightarrow M_{\bfP}$ by
$\Gamma_{M_\bfP}$. We assume that 
$\Gamma$ is \emph{neat} in the sense of 
Borel~\cite{oldborelbook}*{\S 17}. By definition, $\Gamma$ is 
neat if the multiplicative subgroup of $\bbC^\times$ generated 
by the eigenvalues of $f(\gamma)\in\mathrm{GL}(n,\bbC)$ is torsion-free for any 
$\gamma\in\Gamma$ and any $\bbQ$-embedding 
$f:G\rightarrow\mathrm{GL}(n,\bbC)$. 
Every lattice has a neat subgroup of 
finite index, and a neat lattice is also
torsion-free~\cite{oldborelbook}*{\S 17}.  
Then $\Gamma_{M_\bfP}\subset M_{\bfP}$ is a torsion-free, uniform 
lattice in~$M_\bfP$, and one obtains a short exact sequence 
\[0\rightarrow\Gamma_{N_\bfP }\rightarrow\Gamma_P
\rightarrow\Gamma_{M_\bfP}\rightarrow 0.\]
Moreover, $\Gamma_P\bs (N_\bfP\times X_\bfP)$ is a fiber bundle over the locally
symmetric space $B_\bfP=\Gamma_{M_\bfP}\bs X_\bfP$ whose fiber is the
nil-manifold $\Gamma_{N_\bfP}\bs N_\bfP$. The assumption that 
$\Gamma$ is neat is needed 
to ensure that $B_\bfP$ is a manifold. 
The homeomorphism 
$\alpha_0\circ\log: A_{\bfP,t}\rightarrow\bbR_{>t}$ identifies the
metric on $A_{\bfP,t}$ with the standard Euclidean metric on~$\bbR$. 
Every cusp of $\Gamma\bs X$ is topologically a cylinder
$\Gamma_P\bs(N_\bfP\times X_\bfP)\times\bbR_{>t}$. From the discussion 
in Section~\ref{subsec:neigborhood and metric} we see that in the locally
symmetric metric the nil-manifold fiber in $B_\bfP$ shrinks
exponentially while the base stays fixed when the
$A_{\bfP,t}$-component 
goes to infinity. 

%%%%%%%%%%%%%%%%%%%%%%%%%%%%%%%%%%%%%%%%%%%%%%%%%%%%%%%%%%%%%%%
\section{Proof of  Theorem~\ref{thm:q-rank 1 with amenable boundary
  satisfies assumption}}\label{sec:proofqrank}

\begin{proof}[Proof of Theorem~\ref{thm:q-rank 1 with amenable
    boundary satisfies assumption}] 
We know from the discussions in Section~\ref{sec:review}
(especially~\ref{subsec:the quotient description}), to the notation of
which we freely refer, that the locally symmetric space~$M=\Gamma\bs
X$ has finitely many cusps~$(\Gamma_P\bs V_\bfP(t))_{\bfP\in S}$,
where $S$ is a set of representatives of \mbox{$\Gamma$-con}\-jugacy
classes of rational parabolic subgroups and $t>0$ is sufficiently 
large. The complement $W$ of $M$
minus the union of cusps is diffeomorphic to the Borel-Serre
compactification of $M$. It remains to verify hypothesis b) of
Theorem~\ref{thm:Lipschitz volume equals simplicial volume} for every
cusp $\Gamma_P\bs V_\bfP(t)$. We consider the rational Langlands
decomposition $P=N_\bfP A_\bfP M_\bfP$ with respect to a basepoint
$x_0\in X$ such that $A_\bfP$ and $M_\bfP$ are defined over
$\bbQ$. This is always possible~\cite{boreljipaper}*{Proposition~2.2}.
Then $\Lambda_\bfP\defq\Gamma_{N_\bfP}(\Gamma\cap M_\bfP)$ is a finite
index subgroup of~$\Gamma_P$~\cite{boreljipaper}*{Remark~2.7}.  In
horospherical coordinates, the cusp is given by
\[\Gamma_P\bs V_\bfP(t)=
\Gamma_P\bs\bigl( N_\bfP\times X_\bfP\bigr)\times A_{\bfP,t}.\]
Moreover, $\overline W_{\bfP}= \Lambda_\bfP\bs (N_\bfP\times X_\bfP)$ is 
a finite cover 
of $W_{\bfP} = \Gamma_P\bs\bigl( N_\bfP\times X_\bfP\bigr)$. 

Note for the following 
that $N_\bfP\times A_\bfP\times X_\bfP$ inherits a $G$-action from~$X$
via~$\mu_{x_0}$. 
In coordinates, multiplication by~$a_0\in A_\bfP$ is given 
by $a_0(n, a, x)=(c_{a_0}(n), a_0a, x)$ where 
$c_{a_0}(n)=a_0na_0^{-1}$~\cite{boreljipaper}*{Section~2}. 
Remember also that the (positive) root system
$\Phi^+(\frakg,\fraka_\bfP)$ in Section~\ref{subsec:root spaces} 
consists of positive, integral multiples of the single, positive
simple root $\alpha_0$. 

The next lemma provides the non-trivial self-maps (even self-coverings)
assumed in hypothesis b). \noqed
\end{proof}
\begin{lemma}
There is a non-trivial self-covering 
$f_\bfP$ of $\Lambda_\bfP\bs (N_\bfP\times X_\bfP)$ 
such that the following square commutes:
\[\xymatrix{
\Lambda_\bfP\bs (N_\bfP\times X_\bfP)\ar[r]\ar[d]_{f_\bfP}
& (\Gamma\cap M_\bfP)\bs X_\bfP\ar[d]^\id\\
\Lambda_\bfP\bs (N_\bfP\times X_\bfP)\ar[r]& (\Gamma\cap M_\bfP)\bs X_\bfP
}\]
Moreover, the lift of $f_\bfP$ to the universal cover is given by 
$(n,x)\mapsto (a_0na_0^{-1}, x)$ for some $a_0\in A_\bfP$ with 
$\lambda=a_0^{\alpha_0}>1$.
\end{lemma}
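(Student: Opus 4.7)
The plan is to construct $f_\bfP$ by choosing $a_0\in A_\bfP$ with $\lambda\defq a_0^{\alpha_0}>1$ and descending the diffeomorphism
\[\tilde f_\bfP\colon N_\bfP\times X_\bfP\to N_\bfP\times X_\bfP,\quad (n,x)\mapsto\bigl(c_{a_0}(n),x\bigr)\]
of the universal cover to the quotient $\Lambda_\bfP\bs(N_\bfP\times X_\bfP)$. Since $\tilde f_\bfP$ fixes the $X_\bfP$-coordinate, commutativity of the diagram in the lemma will be automatic once the descent has been established.

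The main step, and the real obstacle, is to arrange that $c_{a_0}(\Gamma_{N_\bfP})$ is a proper subgroup of $\Gamma_{N_\bfP}$ of finite index. Here the hypothesis that $\bfA_\bfP$ and $\bfN_\bfP$ are defined over~$\bbQ$ is crucial, since it ensures that the root-space decomposition $\frakn_\bfP=\bigoplus_{k\ge 1}\frakg_{k\alpha_0}$ from Section~\ref{subsec:root spaces} is $\bbQ$-defined, and that $c_{a_0}$ acts on $\frakg_{k\alpha_0}$ by the integer scalar $\lambda^k$ whenever $\lambda\in\bbZ$. Combining this with the arithmeticity of $\Gamma_{N_\bfP}$ and a Mal'cev-type analysis of lattices in nilpotent Lie groups adapted to the $\alpha_0$-grading, one picks $a_0\in A_\bfP$ with $\lambda$ a sufficiently large positive integer so that $c_{a_0}$ restricts to an injective endomorphism of~$\Gamma_{N_\bfP}$ with finite image index~$>1$.

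Given such $a_0$, I will verify the descent of $\tilde f_\bfP$ along~$\Lambda_\bfP=\Gamma_{N_\bfP}(\Gamma\cap M_\bfP)$. Writing an arbitrary element of~$\Lambda_\bfP$ as $n_0 m_0$ with $n_0\in\Gamma_{N_\bfP}$ and $m_0\in\Gamma\cap M_\bfP$, the formula for the $P$-action on horospherical coordinates (recalled in the proof of the theorem above) gives $(n_0m_0)\cdot(n,x)=(n_0m_0nm_0^{-1},m_0x)$. A direct computation, using that $a_0\in A_\bfP$ commutes with $m_0\in M_\bfP$ in the Levi decomposition, then yields
\[\tilde f_\bfP\bigl((n_0m_0)\cdot(n,x)\bigr)=\bigl(c_{a_0}(n_0)\,m_0\bigr)\cdot\tilde f_\bfP(n,x).\]
By the main step, $\phi(n_0m_0)\defq c_{a_0}(n_0)\,m_0$ defines an injective homomorphism $\phi\colon\Lambda_\bfP\to\Lambda_\bfP$, so $\tilde f_\bfP$ descends to a self-covering~$f_\bfP$ of degree $[\Lambda_\bfP:\phi(\Lambda_\bfP)]=[\Gamma_{N_\bfP}:c_{a_0}(\Gamma_{N_\bfP})]>1$; commutativity of the diagram is then immediate since $\tilde f_\bfP$ preserves the projection to~$X_\bfP$.
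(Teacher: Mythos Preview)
Your proposal is correct and follows essentially the same approach as the paper: both define the lift as $(n,x)\mapsto(c_{a_0}(n),x)$, reduce the descent to the commutation of $A_\bfP$ with $M_\bfP$, and identify the main step as finding $a_0$ with $\lambda=a_0^{\alpha_0}$ an integer $>1$ such that $c_{a_0}(\Gamma_{N_\bfP})$ has proper finite index in~$\Gamma_{N_\bfP}$. The only substantive difference is that where you invoke a ``Mal'cev-type analysis adapted to the $\alpha_0$-grading'', the paper transfers the problem to the Lie algebra via the exponential map and cites Dekimpe--Lee~\cite{dekimpe}*{Lemma~5.2} for the existence of an integer~$\lambda>1$ such that the graded dilation $\phi_\lambda$ preserves the $\bbZ$-lattice in~$\frakn_\bfP$ corresponding to~$\Gamma_{N_\bfP}$; your descent verification is in fact more explicit than the paper's.
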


\begin{proof}[Proof of Lemma]
Because $A_\bfP$ commutes with $\Gamma\cap M_\bfP$, 
the map~$c_{a_0}\times\id_{X_\bfP}$ for some~$a_0\in A_\bfP$ induces 
a non-trivial self-covering on the $\Lambda_\bfP$-quotient provided 
$c_{a_0}(\Gamma_{N_\bfP})$ is a proper, finite index subgroup of 
$\Gamma_{N_\bfP}$. The latter condition can be checked on the 
level of Lie algebras. Because $N_\bfP$ and $A_\bfP$ are defined 
over~$\bbQ$, the Lie algebra~$\frakn_\bfP$ has a preferred rational 
structure and the decomposition of~$\frakn_\bfP$ 
in Section~\ref{subsec:root spaces} is defined over~$\bbQ$. Recall that 
the adjoint action of~$a_0$ on~$\frakn_\bfP$ is given by the 
homomorphism~$\phi_\lambda:\frakn_\bfP\rightarrow\frakn_\bfP$ where 
$\lambda=a_0^{\alpha_0}$ and 
$\phi_\lambda(n)=\lambda^in$ for~$n\in\frakg_{i\alpha_0}$. 
The group~$\Gamma_{N_\bfP}$ corresponds to a $\bbZ$-lattice 
in $\frakn_\bfP$ by the exponential map. Dekimpe and Lee show 
 that there is an integer~$\lambda>1$ such that 
$\phi_\lambda$ preserves this lattice~\cite{dekimpe}*{Lemma~5.2}. 
The corresponding~$a_0\in
A_\bfP$ with $a_0^{\alpha_0}=\lambda$ is the desired element. 
\end{proof}

\begin{proof}[Continuation of the proof of 
Theorem~\ref{thm:q-rank 1 with amenable boundary satisfies assumption}] 
Let $f_\bfP$ and $a_0\in A_{\bfP}$ and $\lambda>1$ be as in the previous lemma. 
Recall that $\Phi^+(\frakg,\fraka_\bfP)=\{\alpha_0,2 \alpha_0,\ldots\}$, where 
$\alpha_0$ is the single positive simple root.  
Next we show that left multiplication by~$a_0$ is Lipschitz with Lipschitz 
constant smaller than~$1$:  
First note that left multiplication by 
$a_0$ on the $A_{\bfP,t}$- and $X_\bfP$-component 
of $V_\bfP(t)$ is an isometry or the identity, respectively. 
On the $N_\bfP$-component it is just the conjugation $c_{a_0}$. 
Let $\norm{V}_{(n,a,z)}$ denote the norm of a tangent vector $V$ of $V_\bfP(t)$ 
at the point $(n,a,z)$. 
For $m\in\bbN$ and $N\in\frakg_{m\alpha_0}$ we have 
according to Section~\ref{subsec:neigborhood and metric} 
\[\norm{dc_{a_0}(N)}_{(c_{a_0}(n),aa_0,z)}
   =\norm{\lambda^m N}_{(c_{a_0}(n),aa_0,z)}
   =\lambda^{-2m}\lambda^m\norm{N}_{(n,a,z)}<\norm{N}_{(n,a,z)}.\]
In particular, for any $k\in\bbN$, left multiplication by 
$a_0^k$ has a Lipschitz constant smaller than~$1$. 
Let $t_0=\alpha_0(\log a_0)>0$. If we identify $A_{\bfP, t}$
with~$\bbR_{>t}$ by~$\alpha_0\circ\log$, then the map on~$\Lambda_P\bs
V_\bfP(t)$ induced by left multiplication by~$a_0$ is just
\begin{equation*}
\Lambda_P\bs\bigl(N_\bfP\times X_\bfP\bigr)\times
\bbR_{>t}\ni ((n,z),t')\mapsto
(f_\bfP(n,z), t_0+t'). 
\end{equation*}
Thus, after a suitable reparametrization of $\bbR_{>t}$,  
multiplication by~$a_0^k$ is the desired map $F_i^{(k)}$ 
($i$ corresponds to $\bfP$) in hypothesis b) of 
Theorem~\ref{thm:Lipschitz volume equals simplicial volume}, 
which finishes the proof. 
\end{proof}

\begin{bibdiv}
\begin{biblist}

\bib{bcg}{article}{
   author={Besson, G\'erard},
   author={Courtois, Gilles},
   author={Gallot, Sylvestre},
   title={Entropies et rigidit\'es des espaces localement sym\'etriques de
   courbure strictement n\'egative},
   language={French},
   journal={Geom. Funct. Anal.},
   volume={5},
   date={1995},
   number={5},
   pages={731--799},
   issn={1016-443X},
   %review={\MR{1354289 (96i:58136)}},
}

\bib{juan}{article}{
   author={Boland, Jeffrey},
   author={Connell, Chris},
   author={Souto, Juan},
   title={Volume rigidity for finite volume manifolds},
   journal={Amer. J. Math.},
   volume={127},
   date={2005},
   number={3},
   pages={535--550},
   issn={0002-9327},
   %review={\MR{2141643 (2006b:53052)}},
}

\bib{borelcompact}{article}{
   author={Borel, Armand},
   title={Compact Clifford-Klein forms of symmetric spaces},
   journal={Topology},
   volume={2},
   date={1963},
   pages={111--122},
   issn={0040-9383},
   %review={\MR{0146301 (26 \#3823)}},
}

\bib{oldborelbook}{book}{
   author={Borel, Armand},
   title={Introduction aux groupes arithm\'etiques},
   language={French},
   series={Publications de l'Institut de Ma\-th\'ematique de l'Universit\'e de
   Strasbourg, XV. Actualit\'es Scientifiques et Industrielles, No. 1341},
   publisher={Hermann},
   place={Paris},
   date={1969},
   pages={125},
   %review={\MR{0244260 (39 \#5577)}},
}

\bib{boreljipaper}{article}{
   author={Borel, Armand},
   author={Ji, Lizhen},
   title={Compactifications of locally symmetric spaces},
   journal={J. Differential Geom.},
   volume={73},
   date={2006},
   number={2},
   pages={263--317},
   issn={0022-040X},
   %review={\MR{2226955 (2007d:22031)}},
}
		
\bib{borelbook}{book}{
   author={Borel, Armand},
   author={Ji, Lizhen},
   title={Compactifications of symmetric and locally symmetric spaces},
   series={Mathematics: Theory \& Applications},
   publisher={Birkh\"auser Boston Inc.},
   place={Boston, MA},
   date={2006},
   pages={xvi+479},
   %isbn={978-0-8176-3247-2},
   %isbn={0-8176-3247-6},
   %review={\MR{2189882 (2007d:22030)}},
}

\bib{michelle}{article}{
   author={Bucher-Karlsson, Michelle },
   title={The simplicial volume of closed manifolds covered by
     $\bbH^2\times\bbH^2$},
   eprint={arXiv:math.DG/0703587},
   date={2007},
}

\bib{chris+benson}{article}{
   author={Connell, Christopher},
   author={Farb, Benson},
   title={The degree theorem in higher rank},
   journal={J. Differential Geom.},
   volume={65},
   date={2003},
   number={1},
   pages={19--59},
   issn={0022-040X},
   %review={\MR{2057530 (2005d:53064)}},
}

\bib{dekimpe}{article}{
   author={Dekimpe, Karel},
   author={Lee, Kyung Bai},
   title={Expanding maps on infra-nilmanifolds of homogeneous type},
   journal={Trans. Amer. Math. Soc.},
   volume={355},
   date={2003},
   number={3},
   pages={1067--1077 (electronic)},
   issn={0002-9947},
   %review={\MR{1938746 (2003k:37044)}},
}
		
\bib{eberlein}{book}{
   author={Eberlein, Patrick B.},
   title={Geometry of nonpositively curved manifolds},
   series={Chicago Lectures in Mathematics},
   publisher={University of Chicago Press},
   place={Chicago, IL},
   date={1996},
   pages={vii+449},
   %isbn={0-226-18197-9},
   %isbn={0-226-18198-7},
   %review={\MR{1441541 (98h:53002)}},
}
			
\bib{gromov}{article}{
   author={Gromov, Michael},
   title={Volume and bounded cohomology},
   journal={Inst. Hautes \'Etudes Sci. Publ. Math.},
   number={56},
   date={1982},
   pages={5--99},
   issn={0073-8301},
   %review={\MR{686042 (84h:53053)}}}
}

\bib{lizhen}{article}{
   author={Ji, Lizhen},
   author={Zworski, Maciej},
   title={Scattering matrices and scattering geodesics of locally symmetric
   spaces},
   language={English, with English and French summaries},
   journal={Ann. Sci. \smash{\'Ecole} Norm. Sup. (4)},
   volume={34},
   date={2001},
   number={3},
   pages={441--469},
   issn={0012-9593},
   %review={\MR{1839581 (2002e:58059)}},
}

\bib{ben-jean}{article}{
   author={Lafont, Jean-Fran{\c{c}}ois},
   author={Schmidt, Benjamin},
   title={Simplicial volume of closed locally symmetric spaces of
   non-compact type},
   journal={Acta Math.},
   volume={197},
   date={2006},
   number={1},
   pages={129--143},
   issn={0001-5962},
   %review={\MR{2285319}},
}

\bib{loehmh}{article}{
   author={L\"oh, Clara},
   title={Measure homology and singular homology are isometrically
   isomorphic},
   journal={Math.~Z.},
   volume={253},
   date={2006},
   number={1},
   pages={197--218},
}

\bib{clara-roman}{article}{
   author={L\"oh, Clara},
   author={Sauer, Roman},
   title={Degree theorems and Lipschitz simplicial volume for
     non-positively curved manifolds of finite volume},
   date={2007},
   status={in preparation},
}

\bib{morita}{article}{
   author={Matsumoto, Shigenori},
   author={Morita, Shigeyuki},
   title={Bounded cohomology of certain groups of homeomorphisms},
   journal={Proc. Amer. Math. Soc.},
   volume={94},
   date={1985},
   number={3},
   pages={539--544},
   issn={0002-9939},
   %review={\MR{787909 (87e:55006)}},
}

\bib{storm}{article}{
   author={Storm, Peter A.},
   title={The minimal entropy conjecture for nonuniform rank one lattices},
   journal={Geom. Funct. Anal.},
   volume={16},
   date={2006},
   number={4},
   pages={959--980},
   issn={1016-443X},
   %review={\MR{2255387}},
}

\end{biblist}
\end{bibdiv}
\end{document}